\documentclass[12pt]{amsart}
\usepackage{amsmath,amssymb}
\usepackage{amsfonts}
\usepackage{amsthm}
\usepackage{latexsym}
\usepackage{graphicx}
\setlength{\textwidth}{145mm}
\setlength{\oddsidemargin}{30pt}
\setlength{\evensidemargin}{30pt}

%%%%%%%%%%%%%%%%%%%%%%%%%%%%%%%%%%%%%%%%%%%%%%%%%%%%%%%%%%%%%%%%%%%%%%%%%%%%%%%%%%%%%%%%%%

\def\p{\partial}

\def\R{\mathbb{R}}

\def\vv<#1>{\langle#1\rangle}
\def\sph{\mathbb{S}}

\def\XXint#1#2{\setbox0=\hbox{$#1{#2}{\int}$}{#2}\kern-.5\wd0 }

\def\XXint#1#2#3{{\setbox0=\hbox{$#1{#2#3}{\int}$}
     \vcenter{\hbox{$#2#3$}}\kern-.5\wd0}}

  % Cauchy

 % GOOD
 % BAD

\def\vv<#1>{\langle#1\rangle}

\def\sn{{\rm sn}}
\def\cs{{\rm cs}}

\def\wt{\widetilde}
\def\cut{{\rm cut}}
%%%%%%%%%%%%%%%%%%%%%%%%%%%%%%%%%%%%%%%%%%%%%%%%%%%%%%%%%%%%%%%%%%%%%%%%%%%%%%%%%%%%%%%%%%
\newtheorem{thm}{Theorem}[section]

\newtheorem{cor}{Corollary}[section]
\theoremstyle{definition}

\theoremstyle{remark}

\numberwithin{equation}{section}

\begin{document}

\title{Some aspects of  Cheng-Yau gradient estimates}
\author{Qixuan Hu $^1$, Chengjie Yu$^2$}
\address{Qixuan Hu\\ Department of Mathematics\\Shantou University\\P. R. China}
\email{qxhu@stu.edu.cn}
\address{Chengjie Yu\\ Department of Mathematics\\Shantou University\\P. R. China}
\email{cjyu@stu.edu.cn}
\begin{abstract}
In this note, we extend the rigidity of Cheng-Yau gradient estimate in \cite{HXY} to surfaces with lower Ricci curvature bound.  Motivated by these  sharp Cheng-Yau gradient estimates,  pointwise Cheng-Yau gradient estimates for higher dimensional Riemannian manifolds are obtained, and as their applications,  monotonicity formulas for positive harmonic functions are obtained.
\end{abstract}
\thanks{$^1$ Research partially supported by SRIG from Shantou University with contract no. NTF25026T}
\thanks{$^2$ Research partially supported by GDNSF with contract no. 2025A1515011144.}
\renewcommand{\subjclassname}{%
  \textup{2020} Mathematics Subject Classification}
\subjclass[2020]{Primary 58J05; Secondary 35J15}
\date{}
\keywords{Cheng-Yau gradient estimate, Harnack inequality, Monotonicity formula }
\maketitle
\section{Introduction}
In their seminal work \cite{CY} (see also \cite{Li,SY}), Cheng and Yau established  the following local gradient estimate of positive harmonic functions:
\begin{equation}\label{eq-CY}
\sup_{B_p(R)}\|\nabla \log u\|\leq C_n\left(\frac{1+\sqrt K R}{R}\right)
\end{equation}
where $u$ is a positive harmonic function on $B_p(2R)$ in a complete Riemannian manifold $(M^n,g)$ with Ricci curvature bounded from below by $-(n-1)K$ on $B_p(2R)$ for some nonnegative constant $K$. This estimate plays an important role in geometric analysis and is now a fundamental tool in the field which is called the Cheng-Yau gradient estimate. For example, it implies the classical  Harnack inequality on Riemannian manifolds and  the Liouville theorem for harmonic functions on complete Riemannian manifolds with nonnegative Ricci curvature directly.

The Cheng-Yau gradient estimate \eqref{eq-CY} was later refined by Li-Wang \cite{LW} to the following form:
\begin{equation}\label{eq-LW}
\sup_{B_p(R)}\|\nabla\log u\|\leq (n-1)\sqrt{K}+\frac{C_n}{R}.
\end{equation}
This estimate is sharp in leading term globally. The lower order term was refined to be sharp globally by Munteanu \cite{Mu}:
\begin{equation}\label{eq-Mu}
\sup_{B_p(R)}\|\nabla\log u\|\leq (n-1)\sqrt{K}+\frac{C_n}{R}\exp\left(-C\sqrt K R\right)
\end{equation}
where $C$ is a universal positive constant.

The estimate \eqref{eq-Mu} is only asymptotically sharp when $R\to +\infty$ and is not sharp locally. In the interesting work \cite{Xu}, Xu considered the problem of Cheng-Yau gradient estimate that is sharp locally. Xu first considered the problem on $\R^n$ and obtained the following result:
\begin{thm}[Xu \cite{Xu}]\label{thm-Xu} Let $B(R)$ be the ball of radius $R$ centered at the origin of $\R^n$ with $n\geq 2$ and $u$ be a positive harmonic function on $B(R)$. Then,
\begin{equation}\label{eq-CY-Eu}
 \|\nabla \ln u\|(x)\leq \frac{n-1}{R-\|x\|}+\frac{1}{R+\|x\|},\ \forall x\in B(R).
\end{equation}
Moreover, the equality of \eqref{eq-CY-Eu} holds for some $x\in B(R)$ if and only if $$u(\cdot)=\lambda P(\cdot,y)$$ for some $y\in \p B(R)$ and $\lambda >0$. Here
\begin{equation}\label{eq-Poisson}
 P(x,y)=\frac{R^2-|x|^2}{n\omega_nR|y-x|^n}
\end{equation}
is the Poisson kernel of $B(R)$.
\end{thm}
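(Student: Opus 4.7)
The plan is to use the integral representation of positive harmonic functions on the ball to express $\nabla \log u(x)$ as a probabilistic average of $\nabla_x \log P(x,\cdot)$, and then reduce the bound to a pointwise maximum of $\|\nabla_x\log P(x,y)\|$ over $y\in\partial B(R)$ via a triangle/Jensen inequality. The equality statement will then come from the standard rigidity in these two inequalities.

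First I would invoke Herglotz's representation theorem (or, equivalently, apply the Poisson integral formula on $\overline{B(R')}$ for each $R'<R$ and pass to the limit $R'\to R^-$) to write
\begin{equation*}
u(x) = \int_{\partial B(R)} P(x,y)\, d\mu(y)
\end{equation*}
for some finite positive Borel measure $\mu$ on $\partial B(R)$. Differentiating under the integral gives
\begin{equation*}
\nabla\log u(x) = \int_{\partial B(R)} \nabla_x\log P(x,y)\, d\nu_x(y),\qquad d\nu_x(y):=\frac{P(x,y)\, d\mu(y)}{u(x)},
\end{equation*}
where $\nu_x$ is a probability measure, and a direct computation from \eqref{eq-Poisson} yields
\begin{equation*}
\nabla_x\log P(x,y) = -\frac{2x}{R^2-\|x\|^2} - \frac{n(x-y)}{\|y-x\|^2}.
\end{equation*}

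Next, I would expand the squared norm and exploit the identity $\langle x,x-y\rangle = \tfrac12\bigl(\|y-x\|^2-(R^2-\|x\|^2)\bigr)$, which holds because $\|y\|=R$. The mixed term cancels beautifully, leaving
\begin{equation*}
\|\nabla_x\log P(x,y)\|^2 = \frac{2nR^2+(4-2n)\|x\|^2}{(R^2-\|x\|^2)^2} + \frac{n(n-2)}{\|y-x\|^2}.
\end{equation*}
For $n\ge 2$ this is a non-increasing function of $\|y-x\|$, hence maximized at the smallest admissible value $\|y-x\|=R-\|x\|$, attained (uniquely when $x\ne 0$) at $y_0:=Rx/\|x\|$. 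A short algebraic simplification rewrites that maximum as $\bigl(\tfrac{n-1}{R-\|x\|}+\tfrac{1}{R+\|x\|}\bigr)^2$, and the chain
\begin{equation*}
\|\nabla\log u(x)\| \le \int\|\nabla_x\log P(x,y)\|\, d\nu_x \le \max_{y\in\partial B(R)}\|\nabla_x\log P(x,y)\|
\end{equation*}
then delivers \eqref{eq-CY-Eu}.

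For rigidity, equality throughout forces (i) $\nu_x$ to be supported on the maximizing set and (ii) the vectors $\nabla_x\log P(x,y)$ on that support to be collinear with a common sign. When $n\ge 3$, condition (i) already concentrates $\nu_x$ at the single point $y_0$. When $n=2$ the magnitude is actually constant on $\partial B(R)$, but the explicit formula shows that the map $y\mapsto\nabla_x\log P(x,y)/\|\nabla_x\log P(x,y)\|$ is a homeomorphism of $\partial B(R)$ onto the unit circle, so the collinearity condition in (ii) still selects a unique point mass. Either way $\mu=\lambda\delta_{y_0}$ for some $\lambda>0$, i.e.\ $u=\lambda P(\cdot,y_0)$; the converse (that any $u=\lambda P(\cdot,y)$ achieves equality at points on the segment from $0$ to $y$) is a direct verification. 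The only genuinely computational step I anticipate is the clean derivation of $\|\nabla_x\log P(x,y)\|^2$ and the identification of its $y$-maximum with $\tfrac{n-1}{R-\|x\|}+\tfrac{1}{R+\|x\|}$; once that is in hand, Jensen's inequality does the rest.
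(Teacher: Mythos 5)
Your proposal is correct, and the key computation checks out: with $r=\|x\|$ one finds
\begin{equation*}
\frac{2nR^2+(4-2n)r^2}{(R^2-r^2)^2}+\frac{n(n-2)}{(R-r)^2}
=\frac{n^2R^2+2n(n-2)Rr+(n-2)^2r^2}{(R-r)^2(R+r)^2}
=\left(\frac{nR+(n-2)r}{(R-r)(R+r)}\right)^2,
\end{equation*}
and the last expression is exactly $\bigl(\tfrac{n-1}{R-r}+\tfrac{1}{R+r}\bigr)^2$. The Herglotz representation, differentiation under the integral, and the two-stage inequality (triangle inequality for the vector integral, then pointwise maximization over $y\in\partial B(R)$) are all legitimate, and the rigidity discussion correctly identifies when each inequality is saturated. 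One small point: when $n\ge 3$ and $x=0$, condition (i) does not concentrate $\nu_x$ (since $\|y-x\|\equiv R$), so you must fall back on the collinearity argument (ii), exactly as in the $n=2$ case; this is a cosmetic rather than substantive gap.

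Note, however, that this paper does not prove Theorem \ref{thm-Xu}: it is stated as a cited result of Xu \cite{Xu} and used as a black box (e.g.\ in the proof of Theorem \ref{thm-CY-conf}). So there is no in-paper proof to compare against. Your argument via the Riesz--Herglotz representation, the explicit formula for $\|\nabla_x\log P(x,y)\|$, and Jensen/triangle inequality is a clean, self-contained route; it is quite different in flavor from the Bochner-formula/maximum-principle and comparison-function machinery the paper deploys for its own results (Theorems \ref{thm-CY-surface} and \ref{thm-CY-manifold}), which are designed to work on general Riemannian manifolds where no Poisson-kernel representation is available. On the flat ball, where the representation does exist, your approach has the advantage of delivering both the sharp constant and the rigidity in one stroke, with no cutoff or barrier construction.
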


It is then a natural question that if the estimate \eqref{eq-CY-Eu} also holds on complete Riemannian manifolds with nonnegative Ricci curvature. In \cite{Xu}, Xu also gave an affirmative answer to this question when $n=2$.
\begin{thm}[Xu \cite{Xu}]\label{thm-Xu-surface} Let $(M^2,g)$ be a complete Riemannian surface, $p\in M$. and $u$ be a positive harmonic function on $B_p(R)$. Suppose the Ricci curvature is nonnegative on $B_p(R)$. Then,
\begin{equation}\label{eq-Xu}
\|\nabla\log u\|(x)\leq \frac{2R}{R^2-r^2(x)}\ \forall\ x\in B_p(R),
\end{equation}
where $r(x)=d(p,x)$ is the distance between $p$ and $x$.
\end{thm}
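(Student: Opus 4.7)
The plan is to run a Cheng--Yau-style maximum principle on a cutoff functional whose shape already builds the extremal constant in. Set $v=\log u$ and $f=|\nabla v|^{2}$, so that $\Delta v=-f$. The identity $\frac{1}{R-r}+\frac{1}{R+r}=\frac{2R}{R^{2}-r^{2}}$ shows that \eqref{eq-Xu} is exactly the Riemannian analogue of the $n=2$ case of Theorem \ref{thm-Xu}, and accordingly I would work with
$$F(x):=\bigl(R^{2}-r^{2}(x)\bigr)^{2}f(x)$$
on $\ol{B_{p}(R')}$ for each $R'<R$ and then let $R'\uparrow R$. Since $F$ vanishes on $\p B_{p}(R')$, it attains its maximum at some interior point $x_{0}$, and the desired estimate reduces to the claim $F(x_{0})\le 4R^{2}$.

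At $x_{0}$ I would apply $\nabla F(x_{0})=0$ and $\Delta F(x_{0})\le 0$, handling the possibility that $x_{0}$ lies in the cut locus of $p$ by the Calabi trick (replacing $r$ by a smooth upper barrier along a minimizing geodesic from $p$ to $x_{0}$). Using $|\nabla r|=1$, the first-order condition yields the key relation
$$\nabla f=\frac{4rf}{R^{2}-r^{2}}\,\nabla r\qquad\text{at }x_{0},$$
so in particular $|\nabla f|^{2}=\frac{16r^{2}f^{2}}{(R^{2}-r^{2})^{2}}$ there. I would then expand $\Delta F$ by the product rule and bound $\Delta(R^{2}-r^{2})$ from below via the Laplace comparison $\Delta r\le 1/r$, valid under $\Ric\ge 0$ in dimension two.

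The decisive surface-specific ingredient is a sharpened Bochner identity. Choosing an orthonormal frame at $x_{0}$ with $e_{1}=\nabla v/|\nabla v|$, the scalar constraint $v_{11}+v_{22}=\Delta v=-f$ pins down the Hessian sufficiently that one obtains the exact identity
$$|\nabla^{2}v|^{2}=\frac{|\nabla f|^{2}}{2f}+f^{2}+\la\nabla v,\nabla f\ra,$$
whereupon the classical Bochner formula applied to $v$ collapses to the equality
$$\tfrac{1}{2}\Delta f=\frac{|\nabla f|^{2}}{2f}+f^{2}+\Ric(\nabla v,\nabla v).$$
This is an \emph{equality} rather than a mere inequality precisely because $n=2$, and it is the only place where the dimension of the surface is really used; it is what will make the final estimate sharp. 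Assuming $\Ric\ge 0$ and substituting the gradient relation from the previous paragraph yields $\Delta f\ge\frac{16r^{2}f}{(R^{2}-r^{2})^{2}}+2f^{2}$ at $x_{0}$.

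Plugging everything into $\Delta F(x_{0})\le 0$, I expect the various $r^{2}f$ contributions to cancel and the remaining inequality to reduce cleanly to $2(R^{2}-r^{2})^{2}f^{2}\le 8R^{2}f$ at $x_{0}$, equivalently $F(x_{0})\le 4R^{2}$. The hard part will be verifying the dimension-two Bochner identity in the third paragraph with the correct signs, so that the Laplace-comparison contribution $\Delta(R^{2}-r^{2})\ge -4$ combines with the Ricci and Hessian terms leaving no slack; once that is done, the cut-locus issue, the behavior at $p$, and the limit $R'\uparrow R$ are standard technicalities.
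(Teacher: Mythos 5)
Your proposal is correct in substance, and the computation you ``expect'' to close does in fact close exactly. I verified the dimension-two Hessian identity: with $e_1=\nabla v/\|\nabla v\|$ one has $v_{1j}=f_j/(2\sqrt f)$ and $v_{22}=-f-v_{11}$ from $\Delta v=-f$, which gives $\|\nabla^2 v\|^2=\frac{\|\nabla f\|^2}{2f}+f^2+\la\nabla v,\nabla f\ra$, and Bochner then collapses to $\frac12\Delta f=\frac{\|\nabla f\|^2}{2f}+f^2+\Ric(\nabla v,\nabla v)$. At the interior maximum $x_0$ of $F_{R'}=\varphi f$ with $\varphi=(R'^2-r^2)^2$, one gets $\la\nabla\varphi,\nabla f\ra=-16r^2f$, $\Delta\varphi\ge-8R'^2+16r^2$ from $\Delta r\le 1/r$, and $\Delta f\ge 16r^2f/(R'^2-r^2)^2+2f^2$; the $r^2f$ terms cancel ($16-32+16=0$) and $0\ge\Delta F_{R'}(x_0)\ge -8R'^2f+2(R'^2-r^2)^2f^2$, so $F_{R'}(x_0)\le 4R'^2$, and $R'\uparrow R$ finishes. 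One slip in the write-up: with the cutoff $(R^2-r^2)^2$ the function $F$ does \emph{not} vanish on $\p B_p(R')$ for $R'<R$; you must use the $R'$-dependent factor $(R'^2-r^2)^2$ on $\ol{B_p(R')}$ (which does vanish there), prove $F_{R'}\le 4R'^2$, and then send $R'\uparrow R$. With that correction the argument is sound.

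On method: this is the classical Cheng--Yau cutoff argument with the optimizing weight $(R'^2-r^2)^2$ built in from the start (essentially the route the paper attributes to Xu's original proof in \cite{Xu}), whereas the paper itself does not reprove Theorem~\ref{thm-Xu-surface} directly but instead proves the more general Theorem~\ref{thm-CY-surface} (of which \eqref{eq-Xu} is the case $K=0$) by the comparison argument of \cite{HXY}: set $v=\ln Q$ with $Q=\|\nabla\log u\|^2$, derive $\Delta v\ge 2e^v+2K$ from \cite[Lemma~5.6]{Li}, exhibit a radial barrier $\wt v=F(r)$ with $\Delta\wt v\le 2e^{\wt v}+2K$ via the same Laplacian comparison, and conclude $v\le\wt v$ by the maximum principle together with Calabi's trick. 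The two conclusions are the same (your bound $F\le 4R^2$ is exactly $v\le\wt v$ after exponentiating), but the barrier formulation feeds naturally into the strong-maximum-principle rigidity analysis that is the point of \cite{HXY} and of Theorem~\ref{thm-CY-surface}, while a cutoff argument like yours proves the inequality but, as the paper remarks, obscures the equality case.
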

Although the estimate \eqref{eq-Xu} is sharp on the Euclidean plane, it is unable to characterize its rigidity in \cite{Xu} because \eqref{eq-Xu} was proved in \cite{Xu} by a complicated cut-off argument in similar spirits as the proofs of \eqref{eq-CY}, \eqref{eq-LW} and \eqref{eq-Mu}. In \cite{HXY}, joint with Xu,  we gave a new proof to \eqref{eq-Xu} by maximum principle and characterized the rigidity of \eqref{eq-Xu}.
\begin{thm}[Hu-Xu-Yu \cite{HXY}]\label{thm-HXY}Let the notations be the same as in Theorem \ref{thm-Xu-surface}. Then, the equality of \eqref{eq-Xu} holds for some point in $B_p(R)$ if and only if $B_p(R)$ is isometric to a ball of radius $R$ in $\R^2$ and $u$ is a positive multiple of the Poisson kernel w.r.t. some point in $\p B_p(R)$.
\end{thm}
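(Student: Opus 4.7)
The ``if'' direction follows directly from Theorem \ref{thm-Xu}, so the task is to prove the rigidity. Let $v=\log u$ and $\phi(r)=2R/(R^2-r^2)$, and consider
\[
F=\|\nabla v\|^2-\phi(r)^2.
\]
Theorem \ref{thm-Xu-surface} gives $F\le 0$ on $B_p(R)$, and the hypothesis provides $x_0\in B_p(R)$ with $F(x_0)=0$, so $F$ attains an interior maximum at $x_0$. The plan is to derive a linear elliptic differential inequality $\mathcal{L}F\ge 0$ on a neighborhood of $x_0$, apply the strong maximum principle to force $F\equiv 0$ on an open set, read off rigidity of the Gauss curvature and of $\nabla^2 v$, propagate this rigidity by unique continuation to all of $B_p(R)$, and then invoke Theorem \ref{thm-Xu} through the resulting isometry with the Euclidean ball.

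The starting point for the inequality is Bochner's formula combined with $\Delta v=-\|\nabla v\|^2$, which gives
\[
\tfrac{1}{2}\Delta\|\nabla v\|^2+\langle\nabla v,\nabla\|\nabla v\|^2\rangle=\|\nabla^2 v\|^2+K\|\nabla v\|^2,
\]
where $K\ge 0$ is the Gauss curvature. Where $r$ is smooth one computes $\tfrac12\Delta\phi(r)^2+\langle\nabla v,\nabla\phi(r)^2\rangle$ directly, using the Laplacian comparison $\Delta r\le 1/r$. Subtracting and invoking the refined two-dimensional Kato-type inequality for $v$ obtained by decomposing $\nabla^2 v$ along $\nabla v/\|\nabla v\|$ and its perpendicular, together with the trace identity $\mathrm{tr}(\nabla^2 v)=-\|\nabla v\|^2$, produces an inequality of the form $\tfrac12\Delta F+\langle\nabla v,\nabla F\rangle\ge -cF$ in a neighborhood of $x_0$. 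Non-smoothness of $r$ at the cut locus of $p$ is treated by Calabi's upper-barrier trick.

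The main obstacle is calibrating the Kato inequality against the explicit expression for $\phi(r)$ tightly enough that the remainder closes, and in particular vanishes on the Euclidean Poisson-kernel model as it must. This is where the sharpness of Theorem \ref{thm-Xu-surface} and the assumption $K\ge 0$ are both used to full strength, with the Laplacian comparison becoming equality exactly when $K=0$ along the relevant radial geodesics. Once $\mathcal{L}F\ge 0$ is in hand, the strong maximum principle forces $F\equiv 0$ on an open neighborhood $U$ of $x_0$, and reading off equality in Bochner and in the Laplacian comparison along $U$ yields $K\equiv 0$ on $U$, $\Delta r=1/r$ along the radial geodesics meeting $U$, and a rigid form of $\nabla^2 v$ matching the Poisson-kernel Hessian in $\R^2$. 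Real analyticity in harmonic charts and unique continuation then propagate $K\equiv 0$ to all of $B_p(R)$.

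Flatness of $(B_p(R),g)$ combined with $\Delta r\equiv 1/r$ up to the cut distance forces the exponential map $\exp_p$ to be a Riemannian isometry from the Euclidean disc of radius $R$ onto $B_p(R)$, so $(B_p(R),g)$ is isometric to $B(R)\subset\R^2$. Transporting $u$ through this isometry produces a positive harmonic function on $B(R)$ attaining equality in \eqref{eq-CY-Eu} at the image of $x_0$, and Theorem \ref{thm-Xu} then identifies it with $\lambda P(\cdot,y)$ for some $y\in\p B(R)$ and $\lambda>0$, which completes the proof.
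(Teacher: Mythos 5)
The broad strategy you outline—compare $\|\nabla\log u\|^2$ against the explicit radial barrier, derive a linear elliptic inequality, invoke the strong maximum principle, and read off rigidity in Bochner and in the Laplacian comparison—is the same spirit as the proof in [HXY] (and as the paper's own proof of Theorem \ref{thm-CY-surface}, which references [HXY, Theorem 3.1]). The one structural difference is that [HXY] and this paper run the maximum principle for $\ln Q - \ln\phi^2$ (where $Q=\|\nabla\log u\|^2$ and $\phi(r)=2R/(R^2-r^2)$), while you propose to work directly with $F=Q-\phi^2$. The log transform is not cosmetic: the two-dimensional refined Bochner estimate from \cite[Lemma 5.6]{Li} reads $Q\Delta Q-\|\nabla Q\|^2\geq 2Q^3+2KQ^2$, equivalently $\Delta(\ln Q)\geq 2e^{\ln Q}+2K$, and the barrier satisfies the reversed inequality, so the nonlinear term $e^{\ln Q}-e^{\ln\phi^2}$ factors immediately into a zeroth-order coefficient via the mean value theorem, giving a clean linear inequality.

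There are two concrete places where your version as written would need repair.

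First, the gradient coefficient. From Bochner with $\Delta v=-Q$ ($v=\log u$) one gets $\tfrac12\Delta Q+\langle\nabla v,\nabla Q\rangle=\|\nabla^2 v\|^2+KQ$, as you say. But when you subtract off $\tfrac12\Delta\phi^2+\langle\nabla v,\nabla\phi^2\rangle$ and try to reach $\tfrac12\Delta F+\langle\nabla v,\nabla F\rangle\geq -cF$ by Cauchy–Schwarz on $\langle\nabla v,\nabla r\rangle$ together with the crude bound $\|\nabla^2 v\|^2\geq\tfrac12 Q^2$, the resulting quantity is genuinely negative at $F=0$ (one can check this by plugging in the equality case, where Cauchy–Schwarz is slack). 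The form of the first-order coefficient that actually closes is not $\nabla v$ but $({\nabla Q+\nabla\phi^2})/{Q}$: writing the refined Bochner as $\Delta Q-\|\nabla Q\|^2/Q\geq 2Q^2+2KQ$ and the barrier as $\Delta\phi^2-\|\nabla\phi^2\|^2/\phi^2\leq 2\phi^4+2K\phi^2$, one finds
\[
\Delta F-\frac{\langle\nabla F,\,\nabla Q+\nabla\phi^2\rangle}{Q}\geq\left[2(Q+\phi^2)+2K-\frac{\|\nabla\phi^2\|^2}{Q\phi^2}\right]F,
\]
which is the linear inequality you want. This is precisely what ``taking the log'' automates; if you insist on working with $F$ you must extract this identity by hand, and your phrase ``calibrating the Kato inequality \ldots so that the remainder closes'' is pointing at this gap but does not fill it.

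Second, the continuation step is overcomplicated and not quite right as stated. Once the linear inequality above is in hand on the set where $Q>0$ (with the cut locus handled by Calabi), the strong maximum principle forces $F\equiv 0$ on the entire connected component of $\{Q>0\}\cap B_p(R)$ containing $x_0$; since $\phi^2>0$ everywhere, that component is both open and closed in $B_p(R)$ and hence equals all of $B_p(R)$. There is no need for analyticity in harmonic charts or unique continuation; indeed the Gauss curvature is not itself a solution of the PDE, so ``unique continuation'' of $K\equiv 0$ is not a valid inference. The propagation you want falls out of the strong maximum principle globally, after which the rigidity of the Laplacian comparison (equality of $\Delta r = 1/r$) and the flatness $K\equiv 0$ from equality in Bochner together identify $B_p(R)$ with the Euclidean disc via $\exp_p$, and Theorem \ref{thm-Xu} finishes as you describe.

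So: correct framework, but the linear differential inequality needs the $({\nabla Q+\nabla\phi^2})/{Q}$ first-order term rather than $2\nabla v$, and the continuation should be read off directly from the strong maximum principle rather than from unique continuation.
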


In this paper, we consider the problem of Cheng-Yau gradient estimate that is sharp locally for Riemannian manifolds with a Ricci curvature lower bound.  By using conformal transformations, we are able to extend the results of Theorem \ref{thm-Xu} to space forms. However, because the Laplacian operator is not conformal invariant when $n\geq 3$, we only get sharp Cheng-Yau gradient estimates for positive solutions to the conformal Laplacian equation when $n\geq 3$.

Before stating the main results, we first fix some notations. For a constant $K$, we define
$$\sn_K(r)=\left\{\begin{array}{cl}\frac{\sin{(\sqrt Kr)}}{\sqrt K}&K>0\\
r&K=0\\
\frac{\sinh(\sqrt{-K}r)}{\sqrt{-K}}&K<0,
\end{array}\right.$$
and
$$\cs_K(r)=\sn'_K(r)=\left\{\begin{array}{cl}\cos(\sqrt K r)&K>0\\
1&K=0\\
\cosh(\sqrt{-K}r)&K<0.
\end{array}\right.$$
\begin{thm}\label{thm-CY-conf}
Let $\mathbb{M}^n_K$ be the space form with constant sectional curvature $K$, $p\in \mathbb{M}^n_K$, and $u$ be a positive solution to the conformal Laplacian equation on $\mathbb{M}^n_K$:
$$\Box_{{\mathbb{M}^n_K}}u:=-\Delta_{\mathbb{M}^n_K} u+\frac{n(n-2)K}{4}u=0$$ on $B_p(R)$ with $R<\frac{\pi}{\sqrt K}$ when $K>0$. Then,
\begin{equation}\label{eq-CY-conf}
\begin{split}
\left\|\nabla\ln \left(\cs_K^{{n-2}}\left(\frac{r}2\right)u\right)\right\|(x)\leq \frac{\cs_K\left(\frac{R}{2}\right)}{\cs_K\left(\frac{r(x)}{2}\right)}\left(\frac{n-1}{2\sn_K\left(\frac{R-r(x)}{2}\right)}+\frac{1}{2\sn_K\left(\frac{R+r(x)}{2}\right)}\right)
\end{split}
\end{equation}
for any $x\in B_p(R)$, where $r(x)=d(p,x)$. Moreover, the equality of \eqref{eq-CY-conf} holds for some point in $B_p(R)$ if and only if $u$ is a positive multiple of the Poisson kernel of $\Box_{\mathbb{M}_K^n}$ for $B_p(R)$ w.r.t. some point in $\p B_p(R)$. In particular, when $n=2$,
\begin{equation}\label{eq-CY-2D}
\|\nabla \ln u\|(x)\leq \frac{\sn_K(R)}{2\sn_K\left(\frac{R+r(x)}{2}\right)\sn_K\left(\frac{R-r(x)}{2}\right)},\ \forall x\in B_p(R).
\end{equation}
Moreover, the equality of \eqref{eq-CY-2D} holds for some point in $B_p(R)$ if and only if $u$ is a positive multiple of the Poisson kernel for $B_p(R)$ w.r.t. some point in $\p B_p(R)$.
\end{thm}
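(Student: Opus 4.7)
The plan is to reduce the statement to the Euclidean case of Theorem \ref{thm-Xu} via the standard conformal representation of a space form as a Euclidean ball. In geodesic polar coordinates around $p$ one has $g_K=dr^2+\sn_K^2(r)\,g_{\sph^{n-1}}$, and seeking conformally Euclidean coordinates of the form $\lambda(r)^2\bigl(d\rho^2+\rho^2 g_{\sph^{n-1}}\bigr)$ forces $d\rho/\rho=dr/\sn_K(r)$, which integrates (in all three signs of $K$) to
$$\rho(r)=\frac{2\sn_K(r/2)}{\cs_K(r/2)},\qquad \lambda(r)=\cs_K^2(r/2),$$
so that $g_K=\cs_K^4(r/2)\,g_0$ with $g_0=d\rho^2+\rho^2 g_{\sph^{n-1}}$ the flat metric. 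When $K>0$, the hypothesis $R<\pi/\sqrt K$ guarantees this change of coordinates is a diffeomorphism on $B_p(R)$ and identifies $B_p(R)$ with the Euclidean ball $B(\rho(R))\subset\R^n$.

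With $\phi:=\cs_K^{n-2}(r/2)$ we have $g_K=\phi^{4/(n-2)}g_0$, so the conformal covariance of the conformal Laplacian,
$$\Box_{g_K}(u)=\phi^{-(n+2)/(n-2)}\bigl(-\Delta_{g_0}(\phi u)\bigr),$$
shows that $\Box_{\mathbb{M}^n_K}u=0$ is equivalent to the function $v:=\cs_K^{n-2}(r/2)\,u$ being a positive Euclidean harmonic function on $B(\rho(R))$. I would then apply Theorem \ref{thm-Xu} to $v$ to get
$$\|\nabla_{g_0}\ln v\|_{g_0}(x)\leq \frac{n-1}{\rho(R)-\rho(x)}+\frac{1}{\rho(R)+\rho(x)}.$$
Since $\|\nabla_{g_K}f\|_{g_K}=\cs_K^{-2}(r/2)\,\|\nabla_{g_0}f\|_{g_0}$ for any function $f$ by the conformal scaling of the gradient norm, the left side is precisely $\|\nabla\ln(\cs_K^{n-2}(r/2)\,u)\|(x)$, the quantity appearing in \eqref{eq-CY-conf}.

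To convert the right side, the addition formulas $\sn_K(a\pm b)=\sn_K(a)\cs_K(b)\pm\cs_K(a)\sn_K(b)$ give
$$\rho(R)\mp\rho(x)=\frac{2\sn_K\bigl((R\pm r)/2\bigr)}{\cs_K(R/2)\cs_K(r/2)},$$
and substituting reproduces \eqref{eq-CY-conf} identically. The rigidity is inherited from Theorem \ref{thm-Xu}: equality at one interior point of $B_p(R)$ forces $v$ to be a positive multiple of the Euclidean Poisson kernel $P(\cdot,y)$ for some $y\in\partial B(\rho(R))$, which after dividing by $\phi$ means $u$ is a positive multiple of the $\Box_{\mathbb{M}^n_K}$-Poisson kernel of $B_p(R)$ at the preimage of $y$ on $\partial B_p(R)$. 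The case $n=2$ collapses further: $\phi\equiv 1$ so $v=u$, and the identities $\sn_K((R+r)/2)+\sn_K((R-r)/2)=2\sn_K(R/2)\cs_K(r/2)$ together with $\sn_K(R)=2\sn_K(R/2)\cs_K(R/2)$ reduce \eqref{eq-CY-conf} to \eqref{eq-CY-2D}. The main obstacle is just bookkeeping with the trigonometric and hyperbolic identities and pinning down the intrinsic meaning of the conformal Poisson kernel; there is no new analytic input beyond Theorem \ref{thm-Xu}.
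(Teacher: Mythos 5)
Your proof is correct and follows essentially the same route as the paper: exploit the conformal covariance of $\Box_g$ to reduce to the Euclidean Theorem~\ref{thm-Xu}, then convert norms and radii via the scaling factor $\cs_K^2(r/2)$ and the addition formulas for $\sn_K$. The only difference is cosmetic: you derive the conformal identification $g_K=\cs_K^4(r/2)\,g_0$ uniformly in $K$ via the ODE $d\rho/\rho=dr/\sn_K(r)$ in geodesic polar coordinates, whereas the paper treats $K>0$ by stereographic projection of $\sph^n$ and $K<0$ by the Poincar\'e ball model separately before rescaling to general $K$.
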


Motivated by \eqref{eq-CY-2D}, we obtain Cheng-Yau gradient estimate that is locally sharp for complete Riemannian surfaces with curvature lower bound and characterize its rigidity.
\begin{thm}\label{thm-CY-surface}
Let $(M^2,g)$ be a complete Riemannian surface, $p\in M$ and $u$ be a positive harmonic function on $B_p(R)$. Suppose that the Gaussian curvature on $B_p(R)$ is no less than $K$ and  $R<\frac{\pi}{\sqrt K}$ when $K>0$. Then,
\begin{equation}\label{eq-CY-surface}
\|\nabla \ln u\|(x)\leq \frac{\sn_K(R)}{2\sn_K\left(\frac{R+r(x)}{2}\right)\sn_K\left(\frac{R-r(x)}{2}\right)}\ \forall x\in B_p(R),
\end{equation}
where $r(x)=d(p,x)$. Moreover, the equality of \eqref{eq-CY-surface} holds for some point in $B_p(R)$ if and only if $B_p(R)$ is isometric to a geodesic ball of radius $R$ in $\mathbb{M}_K^2$ and $u$ is a positive multiple of the Poisson kernel for $B_p(R)$ w.r.t. some point in $\p B_p(R)$.
\end{thm}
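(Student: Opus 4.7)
My approach is to adapt the maximum-principle proof of \cite{HXY}, which handles the case $K=0$, to a general Gaussian curvature lower bound $K$. The essential new ingredient is the Laplacian comparison $\Delta r \leq \cs_K(r)/\sn_K(r)$ on $B_p(R)\setminus\{p\}$, valid in the barrier sense under Gaussian curvature $\geq K$ and reducing to the flat comparison when $K=0$. Setting $v=\log u$ makes the harmonic equation read $\Delta v = -\|\nabla v\|^2$, and writing
\begin{equation*}
\Phi(r) := 2\sn_K\!\left(\tfrac{R+r}{2}\right)\sn_K\!\left(\tfrac{R-r}{2}\right),
\end{equation*}
the goal is equivalent to $\Phi(r(x))\,\|\nabla v\|(x) \leq \sn_K(R)$ on $B_p(R)$.

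I would study the auxiliary function $F(x) := \Phi(r(x))^{2}\,\|\nabla v\|^{2}(x)$ and establish $F\leq \sn_K(R)^{2}$ by an interior maximum argument. To avoid boundary blow-up of $\|\nabla v\|$, first work on $B_p(R')$ with any $R'<R$, where $\|\nabla v\|$ is bounded by the classical Cheng-Yau estimate \eqref{eq-CY} and the analogous function $F_{R'}$ vanishes on $\partial B_p(R')$; the estimate on $B_p(R)$ then follows by sending $R'\to R^-$. At an interior maximum $x_0$ of $F_{R'}$, one has $\nabla F_{R'}(x_0)=0$ and $\Delta F_{R'}(x_0)\leq 0$, and I would combine the Bochner identity
\begin{equation*}
\tfrac12\Delta\|\nabla v\|^{2} = \|\nabla^{2} v\|^{2} + \langle\nabla\Delta v,\nabla v\rangle + \Ric(\nabla v,\nabla v),
\end{equation*}
the harmonic identity $\Delta v = -\|\nabla v\|^2$, the bound $\Ric(\nabla v,\nabla v)\geq K\|\nabla v\|^2$, the Laplacian comparison for $r$ and the refined two-dimensional Kato inequality for $\nabla^{2} v$. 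The ODE identities satisfied by $\Phi$ and $\cs_K/\sn_K$ are calibrated so that the Poisson kernel on $\mathbb{M}^{2}_K$, already known to saturate \eqref{eq-CY-2D} by Theorem \ref{thm-CY-conf}, also saturates every one of these inequalities; therefore the chain collapses to a single algebraic inequality in the number $F_{R'}(x_0)$ forcing $F_{R'}(x_0)\leq \sn_K(R')^2$.

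For the rigidity part, if equality in \eqref{eq-CY-surface} is attained at some interior point, then $F$ realizes its supremum $\sn_K(R)^2$ there, so every inequality in the above chain must be an equality at that point. Applying the strong maximum principle to the linear elliptic operator annihilating $F-\sn_K(R)^2$ propagates the equality to all of $B_p(R)$: in particular both the Laplacian comparison for $r$ and the Ricci bound along $\nabla v$ are saturated everywhere. The rigidity of the Hessian comparison then identifies $B_p(R)$ isometrically with a geodesic ball of radius $R$ in $\mathbb{M}^{2}_K$, and Theorem \ref{thm-CY-conf} in the case $n=2$ identifies $u$ up to a positive multiple with the Poisson kernel based at some point of $\partial B_p(R)$.

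The main obstacle I anticipate is the algebraic bookkeeping at the interior maximum: the ODE identities for $\Phi$ and $\cs_K/\sn_K$, together with the refined Kato inequality, must combine with zero slack, otherwise one recovers only a non-sharp version of \eqref{eq-CY-surface}. A secondary, more routine, issue is handling the cut locus of $p$ inside $B_p(R)$, which is standard via Calabi's trick of perturbing the basepoint along a minimizing geodesic.
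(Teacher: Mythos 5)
Your proposal is essentially the same approach as the paper's proof, differing only in bookkeeping. The paper sets $v=\ln\|\nabla\ln u\|^2$ and uses the differential inequality $\Delta v\geq 2e^v+2K$ from \cite[Lemma 5.6]{Li} (which is precisely the Bochner identity for $\ln u$ combined with the refined two-dimensional Kato inequality you propose to re-derive), then constructs the comparison supersolution $\tilde v = 2\ln\bigl(\sn_K(R)/\Phi(r)\bigr)$ satisfying $\Delta\tilde v\leq 2e^{\tilde v}+2K$ via the same Laplacian comparison $\Delta r\leq \cs_K(r)/\sn_K(r)$, and concludes $v\leq\tilde v$ by maximum principle plus Calabi's trick, deducing rigidity from the strong maximum principle applied to $v-\tilde v$ as in \cite[Theorem 3.1]{HXY}. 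Your quantity $F=\Phi^2\|\nabla\ln u\|^2$ satisfies $F\leq \sn_K^2(R)$ if and only if $v\leq\tilde v$, and the interior-maximum computation you outline — substituting $\nabla F=0$ to eliminate $\nabla Q$ at the max point, plugging in Li's inequality for $\Delta Q$, and invoking the ODE identity $(\Phi')^2-\Phi\Phi''-\Phi\Phi'\,\cs_K(r)/\sn_K(r)-K\Phi^2=\sn_K^2(R)$ — does close with zero slack (I checked the identity using $\Phi=\bigl(\cs_K(r)-\cs_K(R)\bigr)/K$ and $\cs_K^2+K\sn_K^2=1$), so the concern you flag about the algebraic bookkeeping resolves favorably. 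The rigidity argument you sketch is also the correct one. In short: same ingredients, same structure; the only presentational differences are that you work multiplicatively with $F$ rather than additively with $\ln Q - \tilde v$, you exhaust by inner balls $B_p(R')$ rather than handling the boundary directly, and you re-derive rather than cite Li's Lemma 5.6.
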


Furthermore, motivated by Theorem \ref{thm-CY-surface}, we obtain the following pointwise local Cheng-Yau gradient estimate for higher dimensional complete Riemannian manifolds with Ricci curvature bounded from below.

\begin{thm}\label{thm-CY-manifold}
Let $(M^n,g)$ be a complete Riemannian manifold with  $n\geq 3$, $p\in M$ and $u$ be a positive harmonic function on $B_p(R)$.  Suppose that the  Ricci curvature on $B_p(R)$ is no less than $(n-1)K$ and $R<\pi/\sqrt{K}$ when $K>0$. Then,
\begin{equation}\label{eq-CY-manifold}
\|\nabla\ln u\|(x)\leq \frac{(2n-3)\sn_K(R)}{2\sn_K\left(\frac{R+r(x)}{2}\right)\sn_K\left(\frac{R-r(x)}{2}\right)}\ \forall x\in B_p(R),
\end{equation}
where $r(x)=r(p,x)$.
\end{thm}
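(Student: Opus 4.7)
The plan is to adapt the maximum-principle strategy used for Theorem \ref{thm-CY-surface} to higher dimensions. Setting $v=\log u$ and $f:=|\nabla v|^2$, so that the harmonicity of $u$ becomes $\Delta v=-f$, we introduce the radial weight
\[
\eta(r):=2\sn_K\!\Bigl(\tfrac{R+r}{2}\Bigr)\sn_K\!\Bigl(\tfrac{R-r}{2}\Bigr),
\]
which, by product-to-sum identities, satisfies $\eta'(r)=-\sn_K(r)$, $\eta''(r)=-\cs_K(r)$, $K\eta(r)=\cs_K(r)-\cs_K(R)$, and $\sn_K(R)^2-\sn_K(r)^2=\eta(r)[\cs_K(r)+\cs_K(R)]$. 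Working on $B_p(R')$ with $R'<R$ and letting $R'\nearrow R$ at the end, we may assume that $F:=\eta(r)^2 f$ extends continuously to $\overline{B_p(R)}$ with $F|_{\partial B_p(R)}=0$; thus $F$ attains its maximum at an interior point $x_0$, with $r_0:=r(x_0)$, and the goal is to show $F(x_0)\le (2n-3)^2\sn_K(R)^2$.

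At $x_0$, the condition $\nabla F=0$ forces $\nabla f=(2\sn_K(r_0)/\eta(r_0))f\nabla r$, whence $|\nabla|\nabla v||^2=\mu^2 f$ with $\mu:=\sn_K(r_0)/\eta(r_0)$; in the orthonormal frame with $e_1=\nabla v/|\nabla v|$ this also gives $v_{11}=\mu\langle\nabla v,\nabla r\rangle$. The central computational step is to substitute into
\[
\Delta F = \eta^2\Delta f + f\Delta(\eta^2) + 2\langle\nabla\eta^2,\nabla f\rangle
\]
three ingredients: (i) Bochner's formula together with $\Ric\ge (n-1)K$, yielding $\Delta f\ge 2|\nabla^2 v|^2-2\langle\nabla v,\nabla f\rangle+2(n-1)Kf$; (ii) the refined Hessian estimate
\[
|\nabla^2 v|^2 \ge v_{11}^2+2\sum_{i>1}v_{1i}^2+\frac{(\Delta v-v_{11})^2}{n-1},
\]
evaluated in the worst case $v_{11}=\mu\sqrt{f}$ among directions consistent with the constraint $v_{11}^2+\sum_{i>1}v_{1i}^2=\mu^2 f$; and (iii) the Laplacian comparison $\Delta r\le (n-1)\cs_K(r)/\sn_K(r)$, valid on $B_p(R)$ by $R<\pi/\sqrt K$ when $K>0$. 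Imposing $\Delta F(x_0)\le 0$ and simplifying via the two identities above, one obtains the clean quadratic inequality
\[
\bigl(\tau-(n-2)\sn_K(r_0)\bigr)^2 \;\le\; (n-1)^2\sn_K(R)^2 - (n-1)(n-2)\,\eta(r_0)\,\cs_K(r_0),
\]
where $\tau:=\sqrt{F(x_0)}=\eta(r_0)\sqrt{f(x_0)}$.

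When $\cs_K(r_0)\ge 0$ (which always holds for $K\le 0$, or for $K>0$ with $R\le\pi/(2\sqrt K)$), the right-hand side is bounded by $(n-1)^2\sn_K(R)^2$, hence $\tau\le (n-2)\sn_K(r_0)+(n-1)\sn_K(R)\le (2n-3)\sn_K(R)$ after using $\sn_K(r_0)\le\sn_K(R)$, and the estimate $F\le (2n-3)^2\sn_K(R)^2$ follows. The main obstacle is closing the argument in the remaining range $\pi/(2\sqrt K)<R<\pi/\sqrt K$ (with $K>0$), where the maximum point $x_0$ can lie in the region $\cs_K(r_0)<0$ and the residual term $-(n-1)(n-2)\eta(r_0)\cs_K(r_0)>0$ must be absorbed; this requires either exploiting more carefully the strong negativity of $\Delta r$ in that regime or adjusting $F$ by a suitable $\cs_K(r)$-dependent correction. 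Once this last step is carried out, $F\le (2n-3)^2\sn_K(R)^2$ throughout $B_p(R)$, which is the desired pointwise gradient estimate.
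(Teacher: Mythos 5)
Your proposal takes a genuinely different route from the paper's proof and, by your own admission, it does not close.

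\textbf{Where you and the paper diverge.} The paper starts from the same improved Bochner-type inequality for $Q=\|\nabla\ln u\|^2$ (Li's Lemma 5.6), but then absorbs the mixed term $\langle\nabla Q,\nabla\ln u\rangle$ via Young's inequality with a free parameter $\nu\in(0,1)$, sets $v=Q^{q_\nu+1}$ with $q_\nu=\frac{(n-2)^2}{2(n-1)\nu}-\frac{n}{2(n-1)}$ so that the $\|\nabla Q\|^2$ term is eaten by the power, and obtains a clean scalar elliptic inequality $\Delta v\ge\alpha v^{(q_\nu+2)/(q_\nu+1)}+\beta K v$. It then builds an explicit radial supersolution $\wt v=(C\sn_K(R)/\eta)^{2(q_\nu+1)}$, verifies the reverse inequality for $\wt v$ using the Laplacian comparison and the addition formula $\cs_K(\alpha-\beta)=\cs_K\alpha\,\cs_K\beta+K\sn_K\alpha\,\sn_K\beta$ (with Calabi's trick for the cut locus), concludes $v\le\wt v$ by the comparison principle, and finally minimizes $C^2=\frac{(n-2)^2+(2n-3)\nu}{(1-\nu)\nu}$ over $\nu$ to obtain $(2n-3)^2$. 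Crucially, nothing in that computation depends on the sign of $\cs_K(r)$ or on $\sn_K$ being monotone on $[0,R]$; it goes through uniformly for all $R<\pi/\sqrt K$. Your approach instead applies the maximum principle directly to the weighted quantity $F=\eta^2 f$, using Bochner, the refined Hessian inequality, and the Laplacian comparison at the interior maximum. I checked your quadratic inequality $(\tau-(n-2)\sn_K(r_0))^2\le(n-1)^2\sn_K^2(R)-(n-1)(n-2)\eta(r_0)\cs_K(r_0)$ and it is correct; it does yield the desired bound when $K\le 0$ or $R\le\pi/(2\sqrt K)$.

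\textbf{The gap.} As you already flag, the argument fails for $K>0$, $\pi/(2\sqrt K)<R<\pi/\sqrt K$, and the failure is not merely technical: if $r_0>\pi/(2\sqrt K)$ then $\cs_K(r_0)<0$ makes the extra term $-(n-1)(n-2)\eta\cs_K(r_0)$ strictly positive, and since $\sn_K$ is decreasing past $\pi/(2\sqrt K)$ one also has $\sn_K(r_0)>\sn_K(R)$; both effects push the bound on $\tau$ strictly above $(2n-3)\sn_K(R)$, so the quadratic inequality alone cannot close. (There is also a residual subcase you slightly misstate: even with $\cs_K(r_0)\ge 0$ one can have $\sn_K(r_0)>\sn_K(R)$ when $R>\pi/(2\sqrt K)$, so your ``easy case'' as written only covers $K\le 0$ or $R\le\pi/(2\sqrt K)$.) Speculating that ``a $\cs_K(r)$-dependent correction'' might fix it is not a proof. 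The paper's power/comparison-function formulation is precisely the device that sidesteps this obstruction, because the exponent $q_\nu+1$ reshapes the differential inequality so that the supersolution check uses only the addition identity for $\cs_K$, never the sign of $\cs_K(r)$. A smaller gap: you do not mention the cut locus; as the weight $\eta(r)$ is only Lipschitz across $\cut(p)$, an honest maximum-principle argument needs Calabi's trick there, as the paper notes.
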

Note that when $n=2$, the estimate \eqref{eq-CY-manifold} becomes \eqref{eq-CY-surface}. However, in the proof of Theorem \ref{thm-CY-manifold}, we need the assumption that $n\geq 3$. Moreover, we don't have any rigidity for \eqref{eq-CY-manifold} because it is not even sharp in leading term comparing to \eqref{eq-LW}.

Finally, by the gradient estimate \eqref{eq-CY-manifold}, we obtain some monotonicity formulas for positive harmonic functions.

\begin{cor}\label{cor-CY-mono-1}
Let $(M^n,g)$ be a complete Riemannian manifold, $p\in M$ and $u$ be a positive harmonic function on $B_p(R)$.  Suppose that the  Ricci curvature on $B_p(R)$ is no less than $(n-1)K$ and $R<\pi/\sqrt{K}$ when $K>0$. Let
$$M_u(r)=\max_{x\in\p B_p(r)}u(x)\mbox{ and }m_u(r)=\min_{x\in\p B_p(r)}u(x).$$ Then, $\left(\frac{\sn_K\left(\frac{R-r}{2}\right)}{\sn_K\left(\frac{R+r}{2}\right)}\right)^{2n-3}M_u(r)$ is decreasing and $\left(\frac{\sn_K\left(\frac{R+r}{2}\right)}{\sn_K\left(\frac{R-r}{2}\right)}\right)^{2n-3}m_u(r)$ is increasing w.r.t. $r\in [0,R)$. In particular,
\begin{equation}\label{eq-Harnack}
\left(\frac{\sn_K\left(\frac{R-r(x)}{2}\right)}{\sn_K\left(\frac{R+r(x)}{2}\right)}\right)^{2n-3}u(p)\leq u(x)\leq\left(\frac{\sn_K\left(\frac{R+r(x)}{2}\right)}{\sn_K\left(\frac{R-r(x)}{2}\right)}\right)^{2n-3} u(p)
\end{equation}
for any $x\in B_p(R)$.
\end{cor}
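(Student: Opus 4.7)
The plan is to convert the pointwise gradient estimate of Theorem \ref{thm-CY-manifold} (or Theorem \ref{thm-CY-surface} in the borderline case $n=2$, for which $2n-3=1$) into a one-variable comparison along radial geodesics. The first step is to recognize the stated upper bound on $\|\nabla\ln u\|$ as the derivative of a single explicit function of $r$. Define
\[
h(r)\;:=\;(2n-3)\,\ln\frac{\sn_K\!\left(\frac{R+r}{2}\right)}{\sn_K\!\left(\frac{R-r}{2}\right)}.
\]
A direct computation using the addition formula $\sn_K(a)\cs_K(b)+\cs_K(a)\sn_K(b)=\sn_K(a+b)$ with $a=\frac{R+r}{2}$, $b=\frac{R-r}{2}$ gives
\[
h'(r)\;=\;\frac{(2n-3)\sn_K(R)}{2\sn_K\!\left(\frac{R+r}{2}\right)\sn_K\!\left(\frac{R-r}{2}\right)},
\]
so that \eqref{eq-CY-manifold} reads simply as $\|\nabla\ln u\|(x)\leq h'(r(x))$.

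For the monotonicity of $M_u$, fix $0\leq r_1<r_2<R$. Since $M$ is complete and $r_2<R$, the sphere $\p B_p(r_2)$ is compact, so some $x_2\in\p B_p(r_2)$ attains $u(x_2)=M_u(r_2)$. Let $\gamma\colon[0,r_2]\to M$ be a minimizing unit-speed geodesic from $p$ to $x_2$, and set $x_1:=\gamma(r_1)\in\p B_p(r_1)$, so that $u(x_1)\leq M_u(r_1)$. Integrating $(\ln u\circ\gamma)'(t)\leq\|\nabla\ln u\|(\gamma(t))\leq h'(t)$ on $[r_1,r_2]$ yields
\[
\ln M_u(r_2)-\ln M_u(r_1)\;\leq\;\ln u(x_2)-\ln u(x_1)\;\leq\; h(r_2)-h(r_1),
\]
which after exponentiation is precisely the claimed decrease of $\bigl(\sn_K((R-r)/2)/\sn_K((R+r)/2)\bigr)^{2n-3} M_u(r)$.

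The argument for $m_u$ is parallel: pick $x_2\in\p B_p(r_2)$ with $u(x_2)=m_u(r_2)$, form the same type of minimizing radial geodesic, set $x_1=\gamma(r_1)$ with $u(x_1)\geq m_u(r_1)$, and apply $-(\ln u\circ\gamma)'(t)\leq\|\nabla\ln u\|(\gamma(t))\leq h'(t)$ on $[r_1,r_2]$ to obtain $\ln m_u(r_1)-\ln m_u(r_2)\leq h(r_2)-h(r_1)$, i.e.\ the claimed increase. The Harnack inequality \eqref{eq-Harnack} then follows by taking $r_1=0$ (noting $M_u(0)=m_u(0)=u(p)$ and that the prefactor equals $1$ at $r=0$) and $r_2=r(x)$, sandwiched by $m_u(r(x))\leq u(x)\leq M_u(r(x))$.

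I do not expect a real obstacle: once the identity $h'(r)=$ (right-hand side of \eqref{eq-CY-manifold}) is verified, everything reduces to the fundamental theorem of calculus along a single minimizing radial geodesic. The only points requiring a brief mention are the existence of the extremal points on $\p B_p(r_2)$ (from compactness) and the fact that the geodesic $\gamma$ stays inside $B_p(R)$ so that the gradient estimate applies throughout; both are automatic since $r_2<R$ and we use $\gamma$ only on $[0,r_2]$, avoiding any cut-locus issue beyond the endpoint.
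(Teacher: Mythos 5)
Your proof is correct and follows essentially the same route as the paper: integrate the gradient bound from Theorem \ref{thm-CY-manifold} along a minimizing radial geodesic from $p$ to a point of $\partial B_p(r_2)$ where $u$ is extremal, observe that the geodesic passes through $\partial B_p(r_1)$ at parameter $r_1$, and recognize the right-hand side as the derivative of $(2n-3)\ln\bigl(\sn_K((R+r)/2)/\sn_K((R-r)/2)\bigr)$. The only stylistic difference is that you introduce the antiderivative $h$ up front and verify $h'$ via the addition formula, while the paper carries out the integration directly.
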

 When $n=2$ and $K=0$, the Harnack inequality \eqref{eq-Harnack} was obtained in \cite{Xu}.

Finally, we would like to mention that there is a similar story for the Li-Yau gradient estimate (see \cite{LY}) for positive solutions to the heat equation. The classical Li-Yau gradient estimate is sharp for the case that Ricci curvature is nonnegative and is not sharp for the general case that the curvature lower bound is nonzero. It is a natural problem to find sharp Li-Yau gradient estimate for the general case. See \cite{LX,Qi,YZ1,YZ2,YZ3} for some discussions on this problem. Some important monotonicity formulas for positive solutions to heat equations derived from the Li-Yau gradient estimate can be also found in \cite{Ni1,Ni2}.

The rest of the paper is organized as follows. In Section 2, we prove Theorem \ref{thm-CY-conf},  and Theorem \ref{thm-CY-surface}; In Section 3, we prove Theorem \ref{thm-CY-manifold} and Corollary \ref{cor-CY-mono-1}.

\emph{Acknowledgement.} The authors would like to thank Professor Guoyi Xu from Tsinghua University for many helpful discussions. 

\section{Sharp gradient estimate on space forms and surfaces}
In this section, we obtain some sharp Cheng-Yau gradient estimates on space forms and surfaces. These estimates provide test functions for comparison for the general case.

Let's recall the conformal Laplacian operator first. Let $(M^n,g)$ be a Riemannian manifold. The conformal Laplacian operator (see \cite{Pa}) is defined to be
\begin{equation}
\Box_g=-\Delta_g+\frac{n-2}{4(n-1)}R_g
\end{equation}
where $R_g$ is the scalar curvature of $g$. The operator $\Box_g$ is called the conformal Laplacian operator because it satisfies the following identity (see \cite{Pa}):
\begin{equation}\label{eq-conf-inv}
\Box_{e^{2f}g}u=e^{-\frac{n+2}{2}f}\Box_g\left(e^{\frac{n-2}{2}f} u\right),\ \forall f,u\in C^\infty (M).
\end{equation}

We now come to prove Theorem \ref{thm-CY-conf}.
\begin{proof}[Proof of Theorem \ref{thm-CY-conf}]
The case $K=0$ is just Theorem \ref{thm-Xu}. When $K>0$, we only need to prove the theorem for $K=1$, the general case for $K>0$ is then clear by re-scaling. Note that $\mathbb{M}_1^n=\sph^n$. Let $p'$ be the antipodal point of $p$ and $\pi:\sph^{n}\setminus\{p'\}\to \R^n$ the stereoprojection with respect to $p'$. Then, the geodesic ball $B_p(R)$ is mapping to the ball $B(\wt R)\subset \R^n$ with $\wt R=\tan\frac{R}{2}$. Let $\wt g$ be the standard metric on $\R^n$ and $g=(\pi^{-1})^*g_{\sph^n}$. Then
\begin{equation}\label{eq-g}
g=\frac{4}{(1+\|y\|^2)^2}\wt g
\end{equation}
where $\|y\|$ means the Euclidean norm of $y\in \R^n$. By \eqref{eq-conf-inv},
\begin{equation}\label{eq-u}
\wt u(y)=\left(1+\|y\|^2\right)^{\frac{2-n}{2}}u\left(\pi^{-1}(y)\right)
\end{equation}
is a positive harmonic function on $B(\wt R)$. By Theorem \ref{thm-Xu},
\begin{equation}\label{eq-CY-Eu-conf}
\|\wt\nabla \ln \wt u\|(y)\leq \frac{n-1}{\wt R-\|y\|}+\frac{1}{\wt R+\|y\|}, \ \forall y\in B(\wt R).
\end{equation}
Note that
\begin{equation}\label{eq-dist}
\|\pi(x)\|=\tan\frac{r(x)}{2}.
\end{equation}
So, by \eqref{eq-u},
\begin{equation}\label{eq-tilde-u}
\wt u(y)=\cos^{n-2}\left(\frac{r}{2}\right)u\left(\pi^{-1}(y)\right),
\end{equation}
and by \eqref{eq-g}
\begin{equation}\label{eq-grad}
\|\wt \nabla f\|(y)=2\cos^2\left(\frac{r}{2}\right)\|\nabla (f\circ\pi)\|_{g}(\pi^{-1}(y))
\end{equation}
for any $f\in C^\infty(\R^n).$
Then, substituting \eqref{eq-dist}, \eqref{eq-tilde-u} and \eqref{eq-grad} into \eqref{eq-CY-Eu-conf} will give us \eqref{eq-CY-conf} for $K=1$. When the equality of  \eqref{eq-CY-conf} holds for some point in $B_p(R)$, we know that equality of \eqref{eq-CY-Eu-conf} holds for some point in $B(\wt R)$. Then, by Theorem \ref{thm-Xu}, $\wt u$ is a positive multiple of the Poisson kernel w.r.t. some point in $\p B(\wt R)$. Note that the conformal factor is constant on $\p B(\wt R)$. So, $u$ is a positive multiple of the Poisson kernel for $\Box_{\sph^n}$ w.r.t. some point in $\p B_p(R)$. This completes the proof of Theorem \ref{thm-CY-conf} for $K=1$.

When $K<0$, we only need to show the case that $K={-1}$. Note that $\mathbb{M}_{-1}^n$ is the unit ball $\mathbb B^n$ equipped with the metric
\begin{equation}
g=\frac{4}{(1-\|x\|^2)^2}\wt g.
\end{equation}
Because $\mathbb{M}_{-1}^n$ is homogeneous, without loss of generality, we can assume that $p$ is center of $\mathbb B^n$. Note that
\begin{equation}
\|x\|=\tanh\frac{r(x)}{2},\ \forall\ x\in\mathbb B^n
\end{equation}
and
\begin{equation}
\|\wt \nabla f\|(x)=2\cosh^2\left(\frac{r(x)}{2}\right)\|\nabla f\|(x)
\end{equation}
for any $f\in C^\infty(\mathbb B^n)$. Then, the same argument as before will give us \eqref{eq-CY-conf} and its rigidity for $K=-1$. This completes the proof of the theorem.
\end{proof}

We next come to prove Theorem \ref{thm-CY-surface}, a sharp Cheng-Yau gradient estimate on Riemannian surfaces with curvature bounded from below.
\begin{proof}[Proof of Theorem \ref{thm-CY-surface}]
By \cite[Lemma 5.6, P.49]{Li}, one has
\begin{equation}
Q\Delta Q-\|\nabla Q\|^2\geq 2Q^3+2KQ^2
\end{equation}
where $Q=\|\nabla\ln u\|^2$. Let $v=\ln Q$, then
\begin{equation}
\Delta v\geq 2e^v+2K
\end{equation}
on where $Q>0$. Let $$\wt v=F(r(x)):=2\ln\frac{\sn_K(R)}{2\sn_K\left(\frac{R+r(x)}{2}\right)\sn_K\left(\frac{R-r(x)}{2}\right)}.$$
Note that
$$F'(r)=\frac{\sn_K(r)}{\sn_K(\frac{R+r}{2})\sn_K(\frac{R-r}{2})}\geq 0$$
and
$$\Delta r\leq \frac{\cs_K(r)}{\sn_K(r)}$$
on $B_p(R)\setminus \cut(p)$ by the Laplacian comparison, where $\cut(p)$ means the cut-locus of $p$. So, on $B_p(R)\setminus \cut(p)$, we have
\begin{equation*}
\begin{split}
\Delta\wt v=&F''(r)+F'(r)\Delta r\\
\leq&F''(r)+F'(r)\frac{\cs_K(r)}{\sn_K(r)}\\
=&\frac{(F'(r)\sn_K(r))'}{\sn_K(r)}\\
=&\frac{2\cs_K(r)}{\sn_K(\frac{R+r}{2})\sn_K(\frac{R-r}{2})}+\frac{\sn_K^2(r)}{2\sn^2_K(\frac{R+r}{2})\sn^2_K(\frac{R-r}{2})}\\
=&\frac{2\cs_K(r)}{\sn_K(\frac{R+r}{2})\sn_K(\frac{R-r}{2})}+\frac{\sn_K^2(r)-\sn_K^2(R)}{2\sn^2_K(\frac{R+r}{2})\sn^2_K(\frac{R-r}{2})}+2e^{\wt v}\\
=&\frac{2[\cs_K(r)-\cs_K(\frac{R+r}{2})\cs_K(\frac{R-r}{2})]}{\sn_K(\frac{R+r}{2})\sn_K(\frac{R-r}{2})}+2e^{\wt v}\\
=&2 e^{\wt v}+2K
\end{split}
\end{equation*}
where we have used the identity:
$$\cs_K(\alpha-\beta)=\cs_K(\alpha)\cs_K(\beta)+K\sn_K(\alpha)\sn_K(\beta)$$
in the last equality.

The rest of the proof is then the same as the proof of  \cite[(1) \& (2) of Theorem 3.1]{HXY} by using maximum principle and strong maximum principle. This completes the proof of the theorem.
\end{proof}

\section{Gradient estimates and monotonicity formulas}
In this section, by using arguments in the proof of Theorem \ref{thm-CY-surface} using maximum principle,  we prove Theorem \ref{thm-CY-manifold}. Then, as applications of the Cheng-Yau gradient estimates derived, we prove Corollary \ref{cor-CY-mono-1}.

\begin{proof}[Proof of Theorem \ref{thm-CY-manifold}]
By \cite[Lemma 5.6, P.49]{Li},
\begin{equation}\label{eq-CY-n}
Q\Delta Q-\frac{n}{2(n-1)}\|\nabla Q\|^2\geq \frac{2}{n-1}Q^3+2(n-1)KQ-\frac{2(n-2)}{n-1}\vv<Q\nabla Q,\nabla\ln u>
\end{equation}
where $Q=\|\nabla\ln u\|^2$. Then,
\begin{equation}\label{eq-CY-n-1}
\begin{split}
&Q\Delta Q-\frac{n}{2(n-1)}\|\nabla Q\|^2\\
\geq & \frac{2}{n-1}Q^3-\frac{2(n-2)}{n-1}Q^{\frac32}\|\nabla Q\|+2(n-1)KQ^2\\
\geq& \frac{2}{n-1}Q^3-\frac{1}{n-1}\left(2\nu Q^{3}+\frac{(n-2)^2}{2\nu}\|\nabla Q\|^2\right)+2(n-1)KQ^2\\
\end{split}
\end{equation}
where $\nu$ is a constant in $(0,1)$ to be determined. So,
\begin{equation}\label{eq-Q}
Q\Delta Q+q_\nu\|\nabla Q\|^2\geq \frac{2}{n-1}\left(1-\nu\right)Q^3+2(n-1)KQ^2
\end{equation}
where $$q_\nu=\frac{(n-2)^2}{2(n-1)\nu}-\frac{n}{2(n-1)}.$$
Note that $q_\nu+1>0$ when $\nu\in (0,1)$ and $n\geq 3$.  Let
$$v=\|\nabla\ln u\|^{2(1+q_\nu)}=Q^{q_\nu+1}.$$
Then, by \eqref{eq-Q},
\begin{equation}\label{eq-v}
\Delta v\geq\frac{2\left(1-\nu\right)(q_\nu+1)}{n-1}v^{\frac{q_\nu+2}{q_\nu+1}}+2(n-1)(q_\nu+1)Kv.
\end{equation}
Let
$$\wt v=F(r(x)):=\left(\frac{C\cdot\sn_K(R)}{2\sn_K\left(\frac{R+r(x)}{2}\right)\sn_K\left(\frac{R-r(x)}{2}\right)}\right)^{2(q_\nu+1)}$$ with $C$ some positive constant to be determined. Then, by Laplacian comparison:
$$\Delta r\leq (n-1)\frac{\cs_K(r)}{\sn_K(r)}\ \mbox{on }B_p(R)\setminus\cut(p),$$
and that
\begin{equation}
F'(r)=\frac{(q_\nu+1)\sn_K(r)}{\sn_K(\frac{R+r}{2})\sn_K(\frac{R-r}{2})}F(r)\geq 0,
\end{equation}
we have, on $B_p(R)\setminus\cut(p)$,
\begin{equation}
\begin{split}
\Delta \wt v\leq&F''(r)+F'(r)\Delta r\\
\leq&F''(r)+(n-1)F'(r)\frac{\cs_K(r)}{\sn_K(r)}\\
=&n(q_\nu+1)\frac{\cs_K(r)}{\sn_K(\frac{R+r}{2})\sn_K(\frac{R-r}{2})}F+(q_\nu+1)\left(q_\nu+\frac32\right)\frac{\sn_K^2(r)}{\sn_K^2(\frac{R+r}{2})\sn_K^2(\frac{R-r}{2})}F\\
=&n(q_\nu+1)\frac{\cs_K(r)}{\sn_K(\frac{R+r}{2})\sn_K(\frac{R-r}{2})}\wt v+(q_\nu+1)\left(q_\nu+\frac32\right)\frac{\sn_K^2(r)-\sn_K^2(R)}{\sn_K^2(\frac{R+r}{2})\sn_K^2(\frac{R-r}{2})}\wt v\\
&+\frac{2(q_\nu+1)(2q_\nu+3)}{C^2}\wt v^{\frac{q_\nu+2}{q_\nu+1}}\\
=&n(q_\nu+1)\frac{\cs_K(r)}{\sn_K(\frac{R+r}{2})\sn_K(\frac{R-r}{2})}\wt v-(q_\nu+1)\left(2q_\nu+3\right)\frac{2\cs_K(\frac{R+r}{2})\cs_K(\frac{R-r}{2})}{\sn_K(\frac{R+r}{2})\sn_K(\frac{R-r}{2})}\wt v\\
&+\frac{2(q_\nu+1)(2q_\nu+3)}{C^2}\wt v^{\frac{q_\nu+2}{q_\nu+1}}.\\
\end{split}
\end{equation}
Note that
\begin{equation}
2q_\nu+3=\frac{(n-2)^2}{(n-1)\nu}-\frac{n}{n-1}+3\geq\frac{(n-2)^2}{(n-1)}-\frac{n}{n-1}+3=n-1
\end{equation} 
and 
\begin{equation}
n\leq 2(n-1)
\end{equation}
 when $n\geq 2$. So, on $B_p(R)\setminus \cut(p)$, 
\begin{equation*}
\begin{split}
\Delta \tilde v\leq&\frac{2(q_\nu+1)(2q_\nu+3)}{C^2}\wt v^{\frac{q_\nu+2}{q_\nu+1}}+2(n-1)(q_\nu+1)\frac{\cs_K(r)-\cs_K(\frac{R+r}{2})\cs_K(\frac{R-r}{2})}{\sn_K(\frac{R+r}{2})\sn_K(\frac{R-r}{2})}\wt v\\
=&\frac{2(q_\nu+1)(2q_\nu+3)}{C^2}\wt v^{\frac{q_\nu+2}{q_\nu+1}}+2(n-1)(q_\nu+1)K\wt v\\
=&\frac{2\left(1-\nu\right)(q_\nu+1)}{n-1}\wt v^{\frac{q_\nu+2}{q_\nu+1}}+2(n-1)(q_\nu+1)K\wt v,
\end{split}
\end{equation*}
when
\begin{equation}\label{eq-C-Eu}
C^2=\frac{(n-1)(2q_\nu+3)}{1-\nu}=\frac{(n-2)^2+(2n-3)\nu}{(1-\nu)\nu}.
\end{equation}
Then, the same as in the proof of \cite[(1) of Theorem 3.1]{HXY} by using maximum principle and Calabi's trick \cite{Ca}, one has
\begin{equation}\label{eq-comparison}
v\leq \wt v.
\end{equation}
Finally, note that the minimum value of
\begin{equation*}
f(\nu)=\frac{(n-2)^2+(2n-3)\nu}{(1-\nu)\nu}
\end{equation*}
with $\nu\in (0,1)$ is $(2n-3)^2$ with minimum point $\nu_m=\frac{n-2}{2n-3}$. Then, setting $\nu=\nu_m$ and substituting it into \eqref{eq-comparison}, we complete the proof of the theorem.
\end{proof}

Finally, we prove Corollary \ref{cor-CY-mono-1}.
\begin{proof}[Proof of Corollary \ref{cor-CY-mono-1}]
Let $\gamma$ be a normal geodesic starting at $p$. By Theorem \ref{thm-CY-surface} and Theorem \ref{thm-CY-manifold}, for any $0<r_1<r_2<R$,
\begin{equation*}
\begin{split}
&\left|\ln u(\gamma(r_2))-\ln u(\gamma(r_1))\right|\\
=& \left|\int_{r_1}^{r_2}d\ln u(\gamma(s))\right|\\
=&\left|\int_{r_1}^{r_2}\vv<\nabla \ln u(\gamma(s)),\gamma'(s)>ds\right|\\
\leq& \int_{r_1}^{r_2}\|(\nabla\ln u)(\gamma(s))\|ds\\
\leq&\int_{r_1}^{r_2}\frac{(2n-3)\sn_K(R)}{2\sn_K\left(\frac{R+r(\gamma(s))}{2}\right)\sn_K\left(\frac{R-r(\gamma(s))}{2}\right)}ds\\
=&\int_{r_1}^{r_2}\frac{(2n-3)\sn_K(R)}{2\sn_K(\frac{R+s}{2})\sn_K(\frac{R-s}{2})}ds\\
=&(2n-3)\ln\frac{\sn_K\left(\frac{R+s}{2}\right)}{\sn_K\left(\frac{R-s}{2}\right)}\Bigg|^{r_2}_{r_1}.
\end{split}
\end{equation*}
So,
\begin{equation}\label{eq-decreasing}
  \ln u(\gamma(r_2))-(2n-3)\ln\frac{\sn_K\left(\frac{R+r_2}{2}\right)}{\sn_K\left(\frac{R-r_2}{2}\right)}\leq \ln u(\gamma(r_1))-(2n-3)\ln\frac{\sn_K\left(\frac{R+r_1}{2}\right)}{\sn_K\left(\frac{R-r_1}{2}\right)}
\end{equation}
which implies that $\left(\frac{\sn_K\left(\frac{R-r}{2}\right)}{\sn_K\left(\frac{R+r}{2}\right)}\right)^{2n-3}M_u(r)$ is decreasing, and
\begin{equation}\label{eq-increasing}
\ln u(\gamma(r_1))+(2n-3)\ln\frac{\sn_K\left(\frac{R+r_1}{2}\right)}{\sn_K\left(\frac{R-r_1}{2}\right)}\leq\ln u(\gamma(r_2))+(2n-3)\ln\frac{\sn_K\left(\frac{R+r_2}{2}\right)}{\sn_K\left(\frac{R-r_2}{2}\right)}
\end{equation}
which implies that $\left(\frac{\sn_K\left(\frac{R+r}{2}\right)}{\sn_K\left(\frac{R-r}{2}\right)}\right)^{2n-3}m_u(r)$ is increasing. More precisely, let $x_0\in \p B_p(r_2)$ be such that $u(x_0)=\max_{x\in \p B_p(r_2)}u(x)$ and $\gamma$ be a normal minimal geodesic joining $p$ to $x_0$.
Then, by \eqref{eq-decreasing},
\begin{equation*}
\begin{split}
M_u(r_2)\left(\frac{\sn_K\left(\frac{R-r_2}{2}\right)}{\sn_K\left(\frac{R+r_2}{2}\right)}\right)^{2n-3}=&u(\gamma(r_2))\left(\frac{\sn_K\left(\frac{R-r_2}{2}\right)}{\sn_K\left(\frac{R+r_2}{2}\right)}\right)^{2n-3}\\
\leq& u(\gamma(r_1))\left(\frac{\sn_K\left(\frac{R-r_1}{2}\right)}{\sn_K\left(\frac{R+r_1}{2}\right)}\right)^{2n-3}\\
\leq& M_u(r_1)\left(\frac{\sn_K\left(\frac{R-r_1}{2}\right)}{\sn_K\left(\frac{R+r_1}{2}\right)}\right)^{2n-3}.\\
\end{split}
\end{equation*}
So, $\left(\frac{\sn_K\left(\frac{R-r}{2}\right)}{\sn_K\left(\frac{R+r}{2}\right)}\right)^{2n-3}M_u(r)$ is decreasing. By a similar argument using \eqref{eq-increasing}, $\left(\frac{\sn_K\left(\frac{R+r}{2}\right)}{\sn_K\left(\frac{R-r}{2}\right)}\right)^{2n-3}m_u(r)$ is increasing. This completes the proof of the corollary. 
\end{proof}

\end{document}